\documentclass[11pt]{article}

\usepackage[T1]{fontenc}
\usepackage{lmodern}
\usepackage{amsmath,amssymb,amsthm}
\usepackage[margin=1in]{geometry}
\usepackage[hidelinks]{hyperref}

\hypersetup{
  pdftitle={The Maximum Permanent of a Stochastic Matrix of Bounded Rank},
  pdfauthor={Yair Lavi}
}

\newtheorem{theorem}{Theorem}[section]
\newtheorem{proposition}[theorem]{Proposition}
\newtheorem{lemma}[theorem]{Lemma}
\newtheorem{corollary}[theorem]{Corollary}
\theoremstyle{remark}
\newtheorem*{remark}{Remark}

\title{The Maximum Permanent of a Stochastic Matrix of Bounded Rank}
\author{Yair Lavi}
\date{21 August 2026}

\begin{document}
\maketitle

\begin{abstract}

Let \(A\) be a stochastic \(n\times n\) matrix
with \(\operatorname{rank}A\leq k\), where \(1\leq k\leq n\). Write
\(n=qk+s\), where \(0\leq s<k\). The author conjectured in 2018 that

\[
 \operatorname{per}A\leq
 \left(\frac{q!}{q^q}\right)^{k-s}
 \left(\frac{(q+1)!}{(q+1)^{q+1}}\right)^s,
\]

with equality if and only if

\[
 A=P\left(J_q^{\oplus(k-s)}\oplus J_{q+1}^{\oplus s}\right)Q,
\]

where \(P,Q\) are permutation matrices and \(J_t\) is the \(t\times t\)
matrix with every entry \(1/t\). We prove this conjecture in full.
\end{abstract}

\noindent\textbf{Keywords.} Permanent; stochastic matrix; bounded rank;
Tverberg's theorem; matrix scaling; doubly stochastic matrix.

\noindent\textbf{2020 Mathematics Subject Classification.} Primary 15A15;
Secondary 15B51, 52A35.

\section{Introduction and main theorem}

For an \(n\times n\) matrix \(A=(a_{ij})\), its permanent is

\[
 \operatorname{per}A
 =\sum_{\sigma\in S_n}\prod_{i=1}^n a_{i,\sigma(i)}.
 \tag{1.1}
\]

A nonnegative matrix is \textbf{row stochastic}, \textbf{column stochastic}, or
\textbf{doubly stochastic} when all its row sums, all its column sums, or both,
respectively, are one. Write \(\Omega_n\) for the set of doubly stochastic
\(n\times n\) matrices.

Throughout, \(\operatorname{rank}\) denotes ordinary matrix rank.

The elementary bound \(\operatorname{per}A\leq1\) for a row- or
column-stochastic matrix is attained exactly by permutation matrices, all of
which have full rank.

For a positive integer \(t\), let \(\mathbf1_t\in\mathbb R^t\) denote the
column vector whose entries are all one, and set

\[
 J_t=\frac1t\mathbf1_t\mathbf1_t^{\mathsf T}.
\]

When its length is clear, we write simply \(\mathbf1\).

Given \(1\leq k\leq n\), write

\[
 n=qk+s,\qquad 0\leq s<k,                                  \tag{1.2}
\]

and let \(\rho_{n,k}\) denote the balanced \(k\)-part partition consisting
of \(k-s\) parts \(q\) and \(s\) parts \(q+1\). For a partition
\(\rho=(t_1,\ldots,t_k)\), put

\[
 J_\rho=J_{t_1}\oplus\cdots\oplus J_{t_k},
 \qquad
 C_{n,k}=\operatorname{per}J_{\rho_{n,k}}
 =\prod_{t\in\rho_{n,k}}\frac{t!}{t^t}.                    \tag{1.3}
\]

The author conjectured in 2018 \cite{lavi2018} that every nonnegative row- or
column-stochastic \(n\times n\) matrix \(A\) with
\(\operatorname{rank}A\leq k\) satisfies

\[
 \operatorname{per}A\leq C_{n,k}
 =\left(\frac{q!}{q^q}\right)^{k-s}
  \left(\frac{(q+1)!}{(q+1)^{q+1}}\right)^s.              \tag{1.4}
\]

The author also conjectured that equality holds exactly when
\(A=PJ_{\rho_{n,k}}Q\) for permutation matrices \(P,Q\). We prove both
claims for every order and rank ceiling.

\begin{theorem}
Let
\(1\leq k\leq n\), and let \(A\geq0\) be an \(n\times n\) matrix with
\(\operatorname{rank}A\leq k\).

\par\smallskip
\noindent\textup{1.}\enspace If \(A\) is row stochastic or column stochastic,
then

\[
 \boxed{\operatorname{per}A\leq C_{n,k}.}                \tag{1.5}
\]

Equality holds exactly when

\[
 A=PJ_{\rho_{n,k}}Q                                      \tag{1.6}
\]

for permutation matrices \(P,Q\).

\par\smallskip
\noindent\textup{2.}\enspace For an arbitrary nonnegative \(A\), let \(r_i\)
and \(c_j\) be its row and column sums, and put

\[
 R(A)=\prod_{i=1}^n r_i,
 \qquad
 C(A)=\prod_{j=1}^n c_j.
\]

Then

\[
 \boxed{
 \operatorname{per}A
 \leq C_{n,k}\min\{R(A),C(A)\}.}                         \tag{1.7}
\]

For completeness, the equality cases in (1.7) are as follows. A zero row or
column always gives equality. Otherwise all line sums are positive; define

\[
 A^r=\operatorname{diag}(r_1^{-1},\ldots,r_n^{-1})A,
 \qquad
 A^c=A\operatorname{diag}(c_1^{-1},\ldots,c_n^{-1}).        \tag{1.8}
\]

If \(R(A)<C(A)\), equality holds exactly when
\(A^r=PJ_{\rho_{n,k}}Q\). If \(C(A)<R(A)\), it holds exactly when
\(A^c=PJ_{\rho_{n,k}}Q\). If \(R(A)=C(A)>0\), equality holds exactly when
both normalizations in (1.8) have the displayed balanced-block form;
equivalently, it is enough that either one does.
\end{theorem}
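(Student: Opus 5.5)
The plan is to reduce everything to the doubly stochastic case by matrix scaling, and then to prove the doubly stochastic bound by a geometric partition argument (Tverberg) combined with induction on \(n\).

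\textbf{Reduction of part 2 to part 1.} If some line of \(A\) is zero, both sides of (1.7) vanish. Otherwise \(\operatorname{per}A=R(A)\operatorname{per}A^r=C(A)\operatorname{per}A^c\). Here \(\operatorname{rank}A^r=\operatorname{rank}A^c=\operatorname{rank}A\), and \(A^r\) is row stochastic while \(A^c\) is column stochastic. Applying (1.5) to each normalization therefore gives (1.7) with the minimum. The equality cases follow directly from those of (1.5). When \(R(A)=C(A)\), note that \(A^r=PJ_{\rho}Q\) forces \(A\) to be a diagonal scaling of a block matrix whose blocks are rank one. Checking the column sums then shows that \(A^c\) has the same form, which gives ``either one suffices''. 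Since permanent and rank are invariant under transposition, in part 1 it suffices to treat row-stochastic \(A\).

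\textbf{Step 1: from row stochastic to doubly stochastic.} Let \(A\) be row stochastic with \(\operatorname{per}A>0\). Then \(A\) has a positive diagonal, and we may restrict to the support where the Sinkhorn scaling exists, possibly as a limit. Write \(B=XAY\in\Omega_n\) with \(X,Y\) positive diagonal. Then \(\operatorname{rank}B=\operatorname{rank}A\) and \(\operatorname{per}A=\operatorname{per}B/(\det X\det Y)\). I would show \(\det X\det Y\geq1\) by weighted AM--GM. The columns of \(B\) sum to one and the rows of \(A\) sum to one, so \(\sum_j y_j^{-1}\) can be compared with \(n\) using \(\sum_{i,j}x_ia_{ij}y_j=n\). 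Equality should force \(X,Y\) scalar, hence \(A\) already doubly stochastic. Limits are handled by continuity, since \(\Omega_n\cap\{\operatorname{rank}\leq k\}\) is closed. This reduces (1.5) to \(B\in\Omega_n\).

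\textbf{Step 2: the doubly stochastic bound.} The rows \(b_1,\dots,b_n\) of \(B\) lie in the affine hull of dimension \(\leq k-1\) of the simplex slice. Tverberg's theorem in \(\mathbb R^{k-1}\) applies whenever \(n\geq(r-1)k+1\), which permits \(r=q\) when \(s\geq1\) and \(r=q\) in general. It partitions the rows into \(q\) classes whose convex hulls share a point \(p\). Symmetrically, the same holds for the columns. I would use this to run an induction on \(n\), expanding the permanent along a suitably chosen set of \(k\) rows, roughly one from each ``direction''. Removing them leaves an \((n-k)\times(n-k)\) submatrix of rank \(\leq k\). After rescaling via (1.7) for smaller \(n\), this gives the recursion \(C_{n,k}=C_{n-k,k}\cdot\prod_t\bigl(\tfrac{t!}{t^t}\big/\tfrac{(t-1)!}{(t-1)^{t-1}}\bigr)\). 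The common point \(p\) is what controls the product of the row and column sums of the minors, via AM--GM, as \(\prod_t (1-1/t)^{t-1}\).

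\textbf{Main obstacle.} The hardest step is making this local factor exactly match the ratio \(C_{n,k}/C_{n-k,k}\), and obtaining the equality characterization at the same time. Equality must force each Tverberg class to collapse onto \(p\), so that rows in a common block are identical. It must also force the supports to be disjoint, which yields \(PJ_{\rho_{n,k}}Q\), and it must force balanced sizes through the strict log-concavity of \(t\mapsto\log(t!/t^t)\). If this direct recursion fails, my fallback is a convexity argument instead. I would show that a maximizer on the compact set \(\Omega_n\cap\{\operatorname{rank}\leq k\}\) has all its rows equal to at most \(k\) distinct vectors, using the Lagrange condition \(\operatorname{per}B(i|j)=\text{const}\) on the support together with Tverberg to merge rows. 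This would reduce the problem to block matrices, and maximizing \(\prod t_i!/t_i^{t_i}\) over partitions then gives the balanced partition \(\rho_{n,k}\).
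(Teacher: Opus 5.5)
Your reduction of (1.7) to (1.5) and your Step 1 are essentially the paper's Lemma 2.1 and Proposition 2.3. One caveat: zeroing entries can raise rank in general, so ``restricting to the support where the scaling exists'' requires the paper's argument that \(\operatorname{rank}A^\circ\le\operatorname{rank}A\). Your limit route avoids this: each alternate row or column normalization multiplies the permanent by \(1/\prod(\text{line sums})\ge1\), and rank can only drop in the limit.

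The genuine gap is Step 2. It carries the entire content of the theorem, and the proposal gives no argument for it. A Tverberg partition of the rows into classes whose convex hulls share a point \(p\) yields no entrywise bound on any particular row. You never say which \(k\) rows to expand along or what they satisfy. Nor do you say why the \(k\)-row Laplace expansion, after applying the order-\((n-k)\) homogeneous bound with column sums \(1-\sum_r a_{r\ell}\), is at most \(\prod_t(1-1/t)^{t-1}\). That would need a multi-row analogue of a one-success probability inequality with row-specific caps \(1/t\), plus the existence of \(k\) rows meeting those caps simultaneously. Neither is formulated, and ``AM--GM at \(p\)'' does not produce them. Your part count is also off: the usable number of Tverberg parts is \(m=\lceil n/k\rceil\), i.e.\ \(q+1\) when \(s\ge1\), and that extra part is exactly what matters in the nondivisible range.

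The fallback does not close the gap either. The set \(\Omega_n\cap\{\operatorname{rank}\le k\}\) is neither convex nor a manifold, so the Lagrange condition is unjustified, and the Tverberg ``merging'' is unspecified. Even a maximizer whose rows take only \(k\) distinct values must still be shown to have disjoint block supports, which is the original extremal problem again. Likewise, ``equality forces each Tverberg class to collapse onto \(p\)'' is asserted, not derived.

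The paper supplies two ingredients you are missing.
\begin{enumerate}
\item \emph{A diffuse row.} Tverberg is applied not to the rows of \(A\) but to a row-stochastic matrix \(P\) of rank at most \(k\) whose rows and coordinates share one index set. With \(m\) parts \(S_t\) and common point \(x\), if every \(p_{ii}>1/m\) then \(x(S_t)>1/m\) for each \(t\), and summing gives \(1>1\); hence \(\min_ip_{ii}\le1/m\). The LP reciprocity \(\alpha_i\mu_i=1/n\) turns this into a convex representation of the centroid \(\frac1n\mathbf1^{\mathsf T}\) of the rows of \(A\in\Omega_n\) with some coefficient at least \(m/n\). So some row of \(A\) has all entries at most \(1/m\).
\item \emph{A one-row recursion.} Expand along that single row and apply the order-\((n-1)\) homogeneous bound; the minor's column sums are \(1-a_\ell\). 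This gives \(\operatorname{per}A\le C_{n-1,k}\sum_ja_j\prod_{\ell\ne j}(1-a_\ell)\). For probability vectors capped at \(1/m\), the sum is at most \((1-1/m)^{m-1}=C_{n,k}/C_{n-1,k}\), which closes the induction. Its equality case \(a=\frac1m\mathbf1_S\), together with a rowspace argument on the minor, yields the classification \(PJ_{\rho_{n,k}}Q\).
\end{enumerate}
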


\begin{remark}
Parts (1) and (2) are equivalent: Lemma 2.1 proves the
equivalence of the inequalities, and the same normalizations identify their
equality cases. We state both because the proof proceeds by simultaneous
induction on the stochastic and homogeneous formulations.
\end{remark}

Every matrix in (1.6) has rank exactly \(k\), so the rank ceiling is sharp.

In the singular specialization \(k=n-1\), the balanced partition is
\((2,1,\ldots,1)\), and Theorem 1.1 gives the following particularly simple
consequence conjectured in \cite{lavi2018}.

\begin{corollary}
Let \(n\geq2\). If \(A\) is a
singular row- or column-stochastic \(n\times n\) matrix, then

\[
 \operatorname{per}A\leq\frac12.                           \tag{1.9}
\]

Equality holds exactly when \(A=P(J_2\oplus I_{n-2})Q\) for permutation
matrices \(P,Q\).
\end{corollary}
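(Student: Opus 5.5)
The corollary should follow from Theorem 1.1 with \(k=n-1\). A singular \(n\times n\) matrix has \(\operatorname{rank}A\leq n-1\), so we take \(k=n-1\geq1\), which is allowed since \(n\geq2\).

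First compute \(q\) and \(s\) in (1.2). If \(n\geq3\), then \(n=1\cdot(n-1)+1\) with \(0\leq 1<n-1\), so \(q=1\) and \(s=1\). The balanced partition \(\rho_{n,n-1}\) therefore has \(n-2\) parts equal to \(1\) and one part equal to \(2\). If \(n=2\), then \(k=1\), and \(n=2\cdot1+0\) gives \(q=2\), \(s=0\); the partition is the single part \((2)\). In both cases \(\rho_{n,n-1}=(2,1,\ldots,1)\), with \(n-2\) ones.

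By (1.3),
\[
 C_{n,n-1}=\frac{2!}{2^2}\left(\frac{1!}{1^1}\right)^{n-2}=\frac12 .
\]
Part 1 of Theorem 1.1 then gives (1.9) for every row- or column-stochastic singular \(A\). Equality holds exactly when \(A=PJ_{(2,1,\ldots,1)}Q\). Since \(J_1=(1)\), we have \(J_{(2,1,\ldots,1)}=J_2\oplus I_{n-2}\), up to a permutation reordering the blocks; that permutation can be absorbed into \(P\) and \(Q\). This is exactly the stated equality case.

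Conversely, \(J_2\oplus I_{n-2}\) is singular, because \(J_2\) has rank \(1\). It is doubly stochastic, and its permanent is \(\tfrac12\). So the equality case is attained.

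There is no real obstacle here beyond the small case \(n=2\), where \(s=0\) rather than \(1\). We handle it separately, as above, to confirm that the partition is still \((2)\). All the substantive work lies in Theorem 1.1.
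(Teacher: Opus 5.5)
Your proposal is correct and follows the paper's derivation: specialize Theorem 1.1 to \(k=n-1\), where \(\rho_{n,n-1}=(2,1,\ldots,1)\) gives \(C_{n,n-1}=1/2\) and the equality class \(P(J_2\oplus I_{n-2})Q\). Your separate check of the case \(n=2\) (where \(q=2\), \(s=0\)), and of singularity in the converse, fills in details that the paper leaves implicit.
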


The entire upper half of the rank range also has a simple closed form.

\begin{corollary}
Let \(n/2<k\leq n\). If \(A\)
is a row- or column-stochastic \(n\times n\) matrix with
\(\operatorname{rank}A\leq k\), then

\[
 \operatorname{per}A\leq\frac{1}{2^{n-k}}.                \tag{1.10}
\]

Equality holds exactly when

\[
 A=P\bigl(J_2^{\oplus(n-k)}\oplus I_{2k-n}\bigr)Q
\]

for permutation matrices \(P,Q\). For \(n\geq3\), Corollary 1.2 is the
case \(k=n-1\).
\end{corollary}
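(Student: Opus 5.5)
This corollary follows from Theorem 1.1 by evaluating \(C_{n,k}\) in the range \(n/2<k\leq n\) and identifying the balanced partition \(\rho_{n,k}\). No new analytic input is needed; the only work is the bookkeeping of the division \(n=qk+s\) from (1.2).

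First, I determine \(q\) and \(s\). If \(k=n\), then \(q=1\) and \(s=0\). In that case \(\rho_{n,n}=(1,\ldots,1)\), so \(C_{n,n}=1=2^{-0}\), and the equality matrices \(PJ_{\rho}Q=PQ\) are exactly the permutation matrices. This agrees with \(P(J_2^{\oplus 0}\oplus I_n)Q\). If \(n/2<k<n\), then \(k<n<2k\), so \(q=1\) and \(s=n-k\), with \(0<s<k\) because \(n-k<k\). Hence \(\rho_{n,k}\) has \(k-s=2k-n\) parts equal to \(1\) and \(s=n-k\) parts equal to \(2\). Note that \(J_1=I_1\), so \(J_1^{\oplus(2k-n)}=I_{2k-n}\).

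Second, I substitute into (1.3). Using \(1!/1^1=1\) and \(2!/2^2=1/2\),
\[
C_{n,k}=1^{2k-n}\left(\tfrac12\right)^{n-k}=\frac{1}{2^{n-k}}.
\]
Part 1 of Theorem 1.1 then gives (1.10) immediately.

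Third, I treat the equality case. By Theorem 1.1, equality holds exactly when \(A=PJ_{\rho_{n,k}}Q\). Since \(J_{\rho_{n,k}}\) is the direct sum of the blocks in any order, I can reorder the blocks by a permutation similarity, which is absorbed into \(P\) and \(Q\). This gives exactly \(P(J_2^{\oplus(n-k)}\oplus I_{2k-n})Q\).

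Finally, for the remark about Corollary 1.2: when \(n\geq 3\) we have \(n-1>n/2\), so \(k=n-1\) lies in the allowed range. Taking \(k=n-1\) gives the bound \(1/2\) with extremizers \(P(J_2\oplus I_{n-2})Q\). A matrix is singular exactly when \(\operatorname{rank}A\leq n-1\), so this is precisely Corollary 1.2. The case \(n=2\) falls outside the range because \(k=1=n/2\) there.

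The only real obstacle is getting \(q=1\) and \(s=n-k\) right, including checking that the condition \(0\leq s<k\) is satisfied.
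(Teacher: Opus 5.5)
Your proposal is correct and matches the paper, which treats this corollary as a direct specialization of Theorem 1.1. In the range \(n/2<k\leq n\), (1.2) gives \(q=1\) and \(s=n-k\), so \(C_{n,k}=2^{-(n-k)}\), and the equality class follows after reordering blocks. Your handling of the endpoint \(k=n\), and of the identification with Corollary 1.2 at \(k=n-1\) for \(n\geq3\), is also accurate.
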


When the balanced partition has equal parts, the general constant likewise
simplifies.

\begin{corollary}
Let \(n=qk\), where \(q,k\geq1\). If
\(A\) is a row- or column-stochastic \(n\times n\) matrix with
\(\operatorname{rank}A\leq k\), then

\[
 \operatorname{per}A
 \leq \left(\frac{q!}{q^q}\right)^k.                      \tag{1.11}
\]

Equality holds exactly when \(A=P(J_q^{\oplus k})Q\) for permutation
matrices \(P,Q\).
\end{corollary}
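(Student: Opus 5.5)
The statement is Corollary 1.4, and it follows from Theorem 1.1(1) by specializing the constant. Take \(n=qk\) in (1.2). Then the quotient is \(q\) and the remainder is \(s=0\). So the balanced partition \(\rho_{n,k}\) consists of \(k\) parts all equal to \(q\), and \(J_{\rho_{n,k}}=J_q^{\oplus k}\).

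Substituting \(s=0\) into (1.3)/(1.4) gives
\[
 C_{n,k}=\left(\frac{q!}{q^q}\right)^{k}\left(\frac{(q+1)!}{(q+1)^{q+1}}\right)^{0}=\left(\frac{q!}{q^q}\right)^{k}.
\]
Hence (1.5) is exactly (1.11) for every row- or column-stochastic \(A\) with \(\operatorname{rank}A\leq k\). The equality clause (1.6) becomes \(A=P(J_q^{\oplus k})Q\) for permutation matrices \(P,Q\), which is the claimed characterization.

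The argument only requires \(1\leq k\leq n\), which holds because \(q\geq1\). I will also remark on the degenerate cases as sanity checks, not as separate arguments. When \(q=1\), the bound is \(1\) and the extremizers are the permutation matrices. When \(k=1\), the bound is \(n!/n^n\), attained only by \(J_n\).

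There is no real obstacle here; the only point needing care is checking that the indexing convention in (1.2) gives \(s=0\) rather than \(s=k\) when \(k\mid n\). It does, because \(0\leq s<k\).
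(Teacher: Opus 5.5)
Your proposal is correct and matches the paper, which treats Corollary 1.4 as the direct specialization of Theorem 1.1 with $s=0$. In that case $\rho_{n,k}$ consists of $k$ parts equal to $q$, so $C_{n,k}=(q!/q^q)^k$ and the equality condition (1.6) becomes $A=P(J_q^{\oplus k})Q$.
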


\begin{remark}
Each of these
corollaries has a corresponding form for arbitrary nonnegative matrices:
multiply the right-hand side of its displayed inequality by
\(\min\{R(A),C(A)\}\). The equality cases, including the zero-line cases,
are precisely those specified after Theorem 1.1 through the normalizations
in (1.8).
\end{remark}

Rank-sensitive upper bounds for permanents of doubly stochastic matrices
go back at least to Marcus and Minc
\cite[Corollaries~3.1 and~3.3]{marcusminc1965}. They
proved that, for \(A\in\Omega_n\),

\[
 \operatorname{per}A\leq
 \sqrt{\frac{\operatorname{rank}A}{n}},                    \tag{1.12}
\]

and, if \(A\) is normal, the stronger bound

\[
 \operatorname{per}A\leq\frac{\operatorname{rank}A}{n}.  \tag{1.13}
\]

Equality in (1.12) requires a permutation matrix. The normal inequality is
strict except for a permutation matrix and the exceptional matrix \(J_2\).
These are earlier permanent bounds expressed directly in terms of ordinary
rank, but they do not determine the maximum at a prescribed rank or its
equality cases. For example, at the singular ceiling \(k=n-1\), the general
Marcus--Minc bound (1.12) gives \(\sqrt{(n-1)/n}\), while their stronger
bound (1.13) for normal matrices gives \((n-1)/n\). For \(n\geq3\), both
values exceed the sharp value \(1/2\) from Corollary 1.2.

The proof has three main ingredients. First, a rank-safe scaling reduction
shows that it is enough to consider doubly stochastic matrices. Second,
Tverberg's theorem and a reciprocal linear-programming duality imply that a
doubly stochastic matrix of rank at most \(k\) has a row whose entries are
all at most \(1/\lceil n/k\rceil\). Third, expansion along this row transfers
the problem to order \(n-1\). Sections 2--4 establish these ingredients and
the sharp value. Section 5 proves the equality classification.

\section{Normalization and doubly stochastic reduction}

\subsection{Homogeneous normalization}

We begin with two elementary observations that will also organize the
induction.

\begin{lemma}
Fix \(n,k\) and a constant
\(\Gamma>0\). The following assertions are equivalent.

\par\smallskip
\noindent\textup{1.}\enspace Every nonnegative row- or column-stochastic
\(n\times n\) matrix \(A\) with \(\operatorname{rank}A\leq k\) satisfies
\(\operatorname{per}A\leq\Gamma\).

\par\smallskip
\noindent\textup{2.}\enspace Every nonnegative \(n\times n\) matrix \(A\)
with \(\operatorname{rank}A\leq k\) satisfies

\[
 \operatorname{per}A
 \leq\Gamma\min\{R(A),C(A)\}.                          \tag{2.1}
\]
\end{lemma}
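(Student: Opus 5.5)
The plan is to prove the two implications separately. Both come down to the fact that diagonal scaling of rows or columns multiplies the permanent by the product of the scaling factors and does not increase the rank.

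\emph{(2) implies (1).} Let \(A\) be row stochastic with \(\operatorname{rank}A\le k\). Then every \(r_i=1\), so \(R(A)=1\), and (2.1) gives \(\operatorname{per}A\le\Gamma\min\{1,C(A)\}\le\Gamma\). The column-stochastic case is symmetric, using \(C(A)=1\).

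\emph{(1) implies (2).} Let \(A\ge0\) have \(\operatorname{rank}A\le k\). If some row or column of \(A\) is zero, then every term of (1.1) vanishes. Hence \(\operatorname{per}A=0\), while \(\min\{R(A),C(A)\}=0\), so (2.1) holds with equality. Otherwise every \(r_i>0\) and every \(c_j>0\), and we form the normalizations \(A^r\) and \(A^c\) of (1.8).

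\begin{itemize}
\item \(A^r=\operatorname{diag}(r_i^{-1})A\) is nonnegative and row stochastic.
\item Left multiplication by an invertible diagonal matrix preserves rank, so \(\operatorname{rank}A^r=\operatorname{rank}A\le k\).
\item Scaling row \(i\) by \(r_i^{-1}\) scales each product in (1.1) by \(r_i^{-1}\), since each product contains exactly one entry from row \(i\). Therefore \(\operatorname{per}A^r=\operatorname{per}A/R(A)\).
\end{itemize}

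Applying (1) to \(A^r\) gives \(\operatorname{per}A\le\Gamma R(A)\). The same argument with \(A^c\), which is column stochastic and has \(\operatorname{per}A^c=\operatorname{per}A/C(A)\), gives \(\operatorname{per}A\le\Gamma C(A)\). Taking the smaller of the two bounds yields (2.1).

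No step is a real obstacle. The only points that need care are the degenerate zero-line case, which must be handled before dividing by line sums, and the observation that invertible diagonal scaling leaves the rank unchanged, so the rank hypothesis carries over to the normalized matrices.
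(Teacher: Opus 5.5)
Your proposal is correct and follows the paper's proof essentially step for step. You handle the zero-line case first, then use the normalizations \(A^r\) and \(A^c\) with \(\operatorname{per}A=R(A)\operatorname{per}A^r=C(A)\operatorname{per}A^c\) and unchanged rank, and for the converse you note that \(\min\{R(A),C(A)\}\leq1\) for a row- or column-stochastic matrix.
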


\begin{proof}
Assume (1). If some row sum or column sum is zero, nonnegativity
makes the corresponding row or column zero, so both sides of (2.1) vanish.
Otherwise all line sums are positive. Left normalization gives

\[
 \operatorname{per}A=R(A)\operatorname{per}A^r,
 \qquad
 \operatorname{rank}A^r=\operatorname{rank}A.              \tag{2.2}
\]

Similarly, right normalization gives

\[
 \operatorname{per}A=C(A)\operatorname{per}A^c.            \tag{2.3}
\]

Applying (1) to \(A^r\) and \(A^c\) yields (2.1), proving (2).

Conversely, assume (2). If \(A\) is row stochastic, then \(R(A)=1\); if it
is column stochastic, then \(C(A)=1\). In either case
\(\min\{R(A),C(A)\}\leq1\), so (2.1) gives
\(\operatorname{per}A\leq\Gamma\). This proves (1).
\end{proof}

The same identities prove the homogeneous equality statement in Theorem
1.1 once the stochastic equality cases are known.

\begin{lemma}
Every nonnegative row- or
column-stochastic \(n\times n\) matrix \(A\) satisfies

\[
 \operatorname{per}A\leq1,                                 \tag{2.4}
\]

with equality exactly at the permutation matrices.
\end{lemma}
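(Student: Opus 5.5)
By symmetry under transposition, which preserves the permanent and swaps row and column sums, I will only treat the case where \(A\) is row stochastic. The plan is to bound each term of (1.1) by the corresponding term of the product of row sums. Expanding that product shows
\[
 \operatorname{per}A=\sum_{\sigma\in S_n}\prod_{i=1}^n a_{i,\sigma(i)}
 \leq\sum_{f:[n]\to[n]}\prod_{i=1}^n a_{i,f(i)}
 =\prod_{i=1}^n r_i = R(A)=1 ,
\]
since the permutations form a subset of all maps \(f:[n]\to[n]\) and every summand is nonnegative. This gives (2.4).

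For the equality case, equality forces \(\prod_i a_{i,f(i)}=0\) for every non-injective map \(f\). Suppose some row \(i_0\) contains two positive entries, in columns \(j\neq j'\). Every row has sum one, so each row \(i\) has some positive entry \(a_{i,g(i)}>0\). I then build a non-injective \(f\) whose product is positive.
\begin{itemize}
\item If \(g\) is itself non-injective, take \(f=g\).
\item If \(g\) is injective, it is a permutation. Then \(g(i_0)\) differs from at least one of \(j,j'\); call that column \(j''\). The column \(j''\) equals \(g(i_1)\) for some \(i_1\neq i_0\). Redefine \(f(i_0)=j''\) and \(f(i)=g(i)\) otherwise.
\end{itemize}
In the second case \(f\) collides at \(i_0\) and \(i_1\), and every factor \(a_{i,f(i)}\) is still positive, which is a contradiction. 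Hence every row has exactly one positive entry, and that entry equals one.

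So \(A\) is a \(0\)-\(1\) matrix with one \(1\) per row. If two rows had their \(1\) in the same column, some column would be zero and \(\operatorname{per}A=0\neq1\). Therefore \(A\) is a permutation matrix. Conversely, permutation matrices clearly have permanent \(1\).

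No step is a real obstacle. The only care needed is the construction of a positive non-injective term in the equality argument, which the case split on whether \(g\) is injective handles.
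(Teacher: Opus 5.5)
Your proof is correct and follows the paper's argument: expand $\prod_i r_i=1$ as a sum over all maps $f:[n]\to[n]$, observe that the permanent is the subsum over bijections, and, for equality, exhibit a positive non-injective term. The only difference is in the equality step. The paper shows that two distinct rows cannot have positive entries in a common column, so the $n$ nonempty row supports are pairwise disjoint and hence singletons; this avoids your case split on whether $g$ is injective. You instead first force one positive entry per row and then exclude a repeated column via $\operatorname{per}A=0$.
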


\begin{proof}
For a row-stochastic matrix,

\[
 1=\prod_{i=1}^n\left(\sum_{j=1}^n a_{ij}\right)
 =\sum_{f:[n]\to[n]}\prod_{i=1}^n a_{i,f(i)}.              \tag{2.5}
\]

The permanent is the subsum indexed by bijections, proving the bound. If
equality holds, every noninjective term is zero. If two different rows had
positive entries in a common column, choosing those entries and arbitrary
positive entries in the remaining rows would give a positive noninjective
term. Thus the nonempty row supports are pairwise disjoint subsets of an
\(n\)-element set. Each is a singleton, and row stochasticity makes \(A\) a
permutation matrix. The column-stochastic case follows by transposition.
\end{proof}

\subsection{A rank-safe Sinkhorn reduction}

We use the standard total-support form of the Sinkhorn--Knopp scaling
theorem \cite{sinkhornknopp1967}: a nonzero square nonnegative matrix can be
scaled by positive
diagonal matrices to a doubly stochastic matrix if every positive entry lies
on a positive diagonal.

For a nonnegative matrix \(M=(m_{ij})\) and \(\sigma\in S_n\), call

\[
 \prod_{i=1}^n m_{i,\sigma(i)}
\]

the \textbf{permanent monomial} indexed by \(\sigma\). We call the monomial
positive when its value is positive, and in that case call \(\sigma\) a
\textbf{positive permutation} for \(M\). Equivalently, \(\sigma\) is positive
for \(M\) exactly when \(m_{i,\sigma(i)}>0\) for every \(i\).

\begin{proposition}
Let \(A\geq0\) be row
stochastic and suppose \(\operatorname{per}A>0\). There is a
\(B\in\Omega_n\) such that

\[
 \operatorname{rank}B\leq\operatorname{rank}A,
 \qquad
 \operatorname{per}A\leq\operatorname{per}B.               \tag{2.6}
\]

Furthermore, if \(A\) is not doubly stochastic, \(B\) can be chosen so that
\(\operatorname{per}A<\operatorname{per}B\).
\end{proposition}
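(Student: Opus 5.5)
The plan is to avoid zeroing entries, which could raise the rank, and to use only \emph{column} scalings followed by \emph{row} renormalization. Both operations preserve rank and keep us in the row-stochastic class. For \(x\in\mathbb R^n_{>0}\), put \(X=\operatorname{diag}(x)\) and
\[
 B_x=\operatorname{diag}\bigl((Ax)_1^{-1},\ldots,(Ax)_n^{-1}\bigr)\,A X .
\]
Here \((Ax)_i>0\) because \(A\) is row stochastic and \(x>0\). Then \(B_x\) is row stochastic, \(\operatorname{rank}B_x=\operatorname{rank}A\), and
\[
 \operatorname{per}B_x=\operatorname{per}A\cdot g(x),\qquad
 g(x)=\frac{\prod_j x_j}{\prod_i (Ax)_i}.
\]
Also \(g(\mathbf 1)=1\). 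By Lemma 2.2, \(\operatorname{per}B_x\le 1\), so \(S:=\sup_x \operatorname{per}B_x\) satisfies \(\operatorname{per}A\le S\le 1\).

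\textbf{Local improvement step.} A direct computation gives
\[
 \partial_{x_j}\log g\,\big|_{x=\mathbf 1}=1-\sum_i a_{ij},
\]
which is \(1\) minus the \(j\)th column sum of \(A\). Since \(A\) is row stochastic, its total sum is \(n\). Hence if \(A\) is not doubly stochastic, some column sum differs from \(1\) and this gradient is nonzero. Moving \(x\) slightly from \(\mathbf 1\) along the gradient gives \(g(x)>1\), that is, \(\operatorname{per}B_x>\operatorname{per}A\). The same statement holds for any row-stochastic matrix with positive permanent in place of \(A\). Equivalently, at an interior maximizer of \(g\) the stationarity equations \(1/x_j=\sum_i a_{ij}/(Ax)_i\) say exactly that \(B_x\) has unit column sums.

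\textbf{Attaining the supremum.} I expect this to be the main obstacle, since a maximizing sequence \(x^{(m)}\) may run to the boundary of the orthant. I will bypass attainment in \(x\) and use compactness in matrix space instead. The matrices \(B_{x^{(m)}}\) lie in the compact set of row-stochastic matrices, so a subsequence converges to some row-stochastic \(B\). Rank is lower semicontinuous, so \(\operatorname{rank}B\le\operatorname{rank}A\). The permanent is continuous, so \(\operatorname{per}B=S\ge\operatorname{per}A>0\).

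\textbf{Concluding that \(B\) is doubly stochastic.} Suppose \(B\) is not doubly stochastic. By the local improvement step there is \(y>0\) such that the row normalization of \(B\operatorname{diag}(y)\) has permanent larger than \(S\). Row normalization of a column scaling depends continuously on the matrix as long as the row sums stay positive. Hence the row normalization of \(B_{x^{(m)}}\operatorname{diag}(y)\) has permanent larger than \(S\) for large \(m\). But that matrix equals \(B_{x^{(m)}\circ y}\), where \(\circ\) is the entrywise product, so its permanent is at most \(S\). This contradiction shows \(B\in\Omega_n\), and (2.6) follows.

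For the strict clause, suppose \(A\) is not doubly stochastic. The local improvement step gives some \(x\) with \(\operatorname{per}B_x>\operatorname{per}A\), so \(S>\operatorname{per}A\). Hence the limit satisfies \(\operatorname{per}B=S>\operatorname{per}A\). This argument does not need the Sinkhorn–Knopp theorem, although it could alternatively be used to identify \(B\).
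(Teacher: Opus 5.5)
Your proof is correct, and it takes a genuinely different route from the paper.

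The paper constructs $B$ explicitly. It prunes $A$ to $A^\circ$, the matrix of entries lying on positive diagonals. It proves $\operatorname{rank}A^\circ\le\operatorname{rank}A$ from the block-triangular structure of the strongly connected components, which yields a nonsingular minor of size $\sum_h d_h$. It then invokes the total-support Sinkhorn--Knopp theorem to obtain $B=DA^\circ E$. Finally, it uses the capacity $\operatorname{Cap}(p_{A^\circ})$ and AM--GM to get $\gamma=\det D\det E\ge1$, with strictness coming from the first-order conditions (2.15) for the convex function $F$.

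You never leave the orbit $\{B_x\}$, on which rank is constant and $\operatorname{per}B_x=\operatorname{per}A\cdot g(x)$. You take a limit point of a maximizing sequence in the compact set of row-stochastic matrices. The rank bound then comes for free from the closedness of $\{\operatorname{rank}\le k\}$. A single first-order computation at $\mathbf 1$ does double duty. First, it shows the limit $B$ is doubly stochastic, using the identity that the row normalization of $B_x\operatorname{diag}(y)$ equals $B_{x\circ y}$ together with continuity. Second, it gives the strict clause. This is essentially the same gradient fact as (2.15), used directly instead of in contrapositive form.

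What your route buys is a short, self-contained argument. It needs neither Sinkhorn--Knopp nor the combinatorial rank lemma (2.7), and it sidesteps the worry that zeroing entries might raise rank.

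What the paper's route buys is an explicit $B$: a diagonal scaling of a pruned copy of $A$, with $\operatorname{rank}B=\sum_h\operatorname{rank}A_h$ and $\operatorname{per}B=\operatorname{per}A/\operatorname{Cap}(p_{A^\circ})$. Your $B$ is known only as some limit point of the scaling orbit. Its permanent is $\operatorname{per}A\cdot\sup g=\operatorname{per}A/\operatorname{Cap}(p_A)$, but its zero pattern and exact rank are not identified.

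For every use the paper makes of the proposition, including the strict clause invoked in Section 5, your version suffices.
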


\begin{proof}
Choose a positive permanent monomial, indexed by \(\sigma\), and
permute columns so that the main diagonal of \(A\) is positive. We will undo
this permutation on both \(A\) and the constructed matrix \(B\) at the end.
On the vertex set \([n]\),
form the directed graph having an arc \(i\to j\) exactly when \(a_{ij}>0\). An entry
\(a_{ij}>0\) belongs to a positive permanent monomial exactly when the arc
\(i\to j\) lies on a directed cycle. Indeed, a positive permutation
decomposes into such cycles. Conversely, a simple directed cycle containing
\(i\to j\) (with a loop \(i\to i\) allowed), together with the positive
main-diagonal entries outside that cycle, is a positive permutation.

Call these entries \textbf{active}, and let \(A^\circ\) retain the active entries
and set all other entries to zero. Recall that two vertices belong to the
same strongly connected component when each is reachable from the other by
a directed path. Mutual reachability is an equivalence relation and hence
partitions \([n]\) into components \(V_1,\ldots,V_t\). A positive arc
\(i\to j\) lies on a directed cycle if and only if \(i\) and \(j\) belong
to the same component: a cycle gives paths in both directions, while a path
\(j\leadsto i\) closes the arc \(i\to j\) to a cycle. Thus \(A^\circ\)
retains exactly the positive entries whose row and column indices lie in the
same component.

Form the directed graph whose vertices are the components and whose arrows
record the positive entries of \(A\) between distinct components. This
component graph is acyclic. Indeed, a directed cycle of components would
make all of its vertices mutually reachable and hence part of a single
strongly connected component. We may therefore order the components so that
every arrow between distinct components points from an earlier component to
a later one; this is a topological ordering. Applying this same order to the
rows and columns makes \(A\) block upper triangular. The matrix \(A^\circ\)
deletes all entries between distinct components and is therefore block
diagonal. Each diagonal block is square because the same set \(V_h\) indexes
its rows and columns.

Let \(A_h=A[V_h,V_h]\) be the \(h\)-th diagonal block and put
\(d_h=\operatorname{rank}A_h\). Choose subsets \(R_h,C_h\subseteq V_h\),
each of size \(d_h\), such that
\(\det A_h[R_h,C_h]\ne0\). Set \(R=\bigcup_hR_h\) and
\(C=\bigcup_hC_h\). In the topological component order, the square submatrix
\(A[R,C]\) is block upper triangular with diagonal blocks
\(A_h[R_h,C_h]\). Consequently,

\[
 \det A[R,C]=\prod_{h=1}^t\det A_h[R_h,C_h]\ne0.
\]

Thus \(A\) has a nonsingular minor of size \(\sum_h d_h\), and hence

\[
 \operatorname{rank}A^\circ=\sum_{h=1}^t d_h
 \leq\operatorname{rank}A.                                 \tag{2.7}
\]

Every nonzero permanent monomial of \(A\) uses only active entries, so

\[
 \operatorname{per}A^\circ=\operatorname{per}A.            \tag{2.8}
\]

The matrix \(A^\circ\) has total support. By Sinkhorn--Knopp, there are
positive diagonal matrices \(D,E\) such that

\[
 B=DA^\circ E\in\Omega_n.                                  \tag{2.9}
\]

It remains to check the direction of the permanent inequality. For a
nonnegative \(n\times n\) matrix \(M\) and
\(x=(x_1,\ldots,x_n)\in\mathbb R_{>0}^n\), define

\[
 p_M(x)=\prod_{i=1}^n(Mx)_i,
 \qquad
 \operatorname{Cap}(p_M)
 =\inf_{x\in\mathbb R_{>0}^n}
   \frac{p_M(x)}{x_1\cdots x_n}.                         \tag{2.10}
\]

If \(B\) is doubly stochastic, weighted AM--GM gives

\[
 \prod_i(Bx)_i
 \geq\prod_i\prod_jx_j^{b_{ij}}
 =\prod_jx_j,
\]

and equality holds at \(x=\mathbf1\). Thus
\(\operatorname{Cap}(p_B)=1\). If

\[
 \gamma=(\det D)(\det E),
\]

then a change of variables in (2.10) gives

\[
 1=\operatorname{Cap}(p_B)
 =\gamma\operatorname{Cap}(p_{A^\circ}).                  \tag{2.11}
\]

Every row sum of \(A^\circ\) is at most the corresponding row sum of \(A\),
which is one. Therefore

\[
 \operatorname{Cap}(p_{A^\circ})
 \leq p_{A^\circ}(\mathbf1)\leq1,                         \tag{2.12}
\]

so \(\gamma\geq1\). Positive diagonal scaling and (2.7)--(2.9) now give

\[
 \operatorname{per}B
 =\gamma\operatorname{per}A^\circ
 \geq\operatorname{per}A,
 \qquad
 \operatorname{rank}B=\operatorname{rank}A^\circ
 \leq\operatorname{rank}A.                                \tag{2.13}
\]

This proves the asserted domination. It remains to show that, when the
original \(A\) is not doubly stochastic, the constructed matrix \(B\)
satisfies \(\operatorname{per}A<\operatorname{per}B\). We prove the
contrapositive: suppose \(\operatorname{per}A=\operatorname{per}B\). Since
the permanent is positive, \(\gamma=1\). Equations (2.11)--(2.12) force
every row sum of \(A^\circ\)
to be one. Thus \(A=A^\circ\). Consider the convex function

\[
 F(z)=\sum_{i=1}^n
 \log\!\left(\sum_{j=1}^n a_{ij}e^{z_j}\right)
 -\sum_{j=1}^n z_j.                                       \tag{2.14}
\]

Putting \(x_j=e^{z_j}\) in (2.10), and using \(A=A^\circ\) and \(\gamma=1\),
gives \(\inf F=\log\operatorname{Cap}(p_A)=-\log\gamma=0\). Since
\(F(0)=0\), zero minimizes \(F\), and

\nopagebreak[4]
\[
 0=\frac{\partial F}{\partial z_j}(0)
 =\sum_{i=1}^n a_{ij}-1                                   \tag{2.15}
\]

for every \(j\). The column sums of \(A\) are one. Since \(A\) was row
stochastic, it is therefore doubly stochastic, as required. Undoing the
initial column permutation preserves row stochasticity, the property of
being doubly stochastic, permanent, and rank, and therefore gives the
asserted matrix for the original \(A\). Thus, when the original \(A\) is not
doubly stochastic, the inequality in (2.6) is strict.
\end{proof}

Consequently, a sharp inequality and equality classification on
\(\Omega_n\) automatically gives the corresponding row- and
column-stochastic result without increasing ordinary rank.

\section{Tverberg theory and a diffuse row}

\subsection{A diagonal-rank theorem}

We use the classical affine Tverberg theorem \cite{tverberg1966}: any

\[
 (r-1)(d+1)+1                                               \tag{3.1}
\]

points, not necessarily distinct, in an affine space of dimension at most
\(d\) can be partitioned into \(r\) nonempty sets whose convex hulls have a
common point. If more points are present, the extras may be distributed among
the parts.

\begin{theorem}
Let \(P=(p_{ij})\geq0\)
be an \(n\times n\) row-stochastic matrix with
\(\operatorname{rank}P\leq k\), and put

\[
 m=\left\lceil\frac nk\right\rceil.
\]

Then

\[
 \boxed{\min_i p_{ii}\leq\frac1m.}                         \tag{3.2}
\]

The constant is sharp.
\end{theorem}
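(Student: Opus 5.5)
The plan is to read the rows of \(P\) as points of a low-dimensional affine space and apply the affine Tverberg theorem (3.1). Write \(p_1,\ldots,p_n\in\mathbb R^n\) for the rows of \(P\). They lie in the row space of \(P\), whose dimension is at most \(k\), and also in the affine hyperplane \(\{x:\mathbf1^{\mathsf T}x=1\}\), because \(P\) is row stochastic. This hyperplane does not pass through the origin. Hence its intersection with the row space is an affine space of dimension at most \(k-1\). So take \(d=k-1\) and \(r=m=\lceil n/k\rceil\). Tverberg needs \((m-1)k+1\le n\) points. This holds because \(m-1<n/k\) gives \((m-1)k<n\), and any extra rows can be distributed among the parts.

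Tverberg then gives a partition \([n]=I_1\sqcup\cdots\sqcup I_m\) into nonempty parts and a common point \(y\) of the convex hulls. Concretely, for each \(h\) there are weights \(\lambda_{h,i}\ge0\), \(i\in I_h\), with \(\sum_{i\in I_h}\lambda_{h,i}=1\) and
\[
 y=\sum_{i\in I_h}\lambda_{h,i}\,p_i .
\]
Since \(y\) is a convex combination of probability vectors, it is itself a probability vector, so \(\sum_j y_j=1\).

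The key step is to read off diagonal entries. Fix a part \(h\) and an index \(j\in I_h\). Compare the \(j\)-th coordinates of the two sides of the displayed identity. All terms are nonnegative, so keeping only the term \(i=j\) gives \(y_j\ge\lambda_{h,j}p_{jj}\). Now suppose, for contradiction, that \(p_{jj}>1/m\) for every \(j\). Then
\[
 \sum_{j\in I_h}y_j\ \ge\ \sum_{j\in I_h}\lambda_{h,j}p_{jj}\ >\ \frac1m\sum_{j\in I_h}\lambda_{h,j}=\frac1m .
\]
The inequality is strict because \(I_h\) is nonempty. Summing over the \(m\) disjoint parts gives \(\sum_j y_j>1\), which is a contradiction. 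Hence \(\min_i p_{ii}\le 1/m\).

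For sharpness, take \(P=J_{\rho_{n,k}}\). It is stochastic and has rank exactly \(k\). Its diagonal entries are \(1/t\) for the parts \(t\) of \(\rho_{n,k}\), so its minimum diagonal entry is \(1/\max\rho_{n,k}\). If \(s>0\), then \(\max\rho_{n,k}=q+1=m\). If \(s=0\), then \(\max\rho_{n,k}=q=n/k=m\). In both cases equality holds in (3.2).

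I expect the only delicate point to be the dimension count: one must use the affine hyperplane \(\mathbf1^{\mathsf T}x=1\) to lower the dimension from \(k\) to \(k-1\). Without this reduction Tverberg would require \((m-1)(k+1)+1\) points, which can exceed \(n\). After that, the proof is the averaging argument above.
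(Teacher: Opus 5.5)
Your proposal is correct and follows the paper's proof: rows as points of an affine space of dimension at most \(k-1\), Tverberg with \(m=\lceil n/k\rceil\) parts, lower-bounding the mass of the common point on each part by the weighted diagonal entries, and summing to the contradiction \(1>1\). Sharpness via \(J_{\rho_{n,k}}\) is also the same.
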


\begin{proof}
Write \(p_i\) for row \(i\) of \(P\). The rows lie in the
rowspace of \(P\), and all lie in the affine hyperplane with coordinate sum
one. Their affine dimension is therefore at most \(k-1\). Since

\[
 (m-1)k<n,
 \qquad\text{hence}\qquad
 n\geq(m-1)k+1,                                             \tag{3.3}
\]

Tverberg's theorem gives a partition

\[
 [n]=S_1\sqcup\cdots\sqcup S_m                             \tag{3.4}
\]

and a probability vector \(x\) belonging to every
\(\operatorname{conv}\{p_i:i\in S_t\}\). Choose coefficients
\(\lambda_i^{(t)}\geq0\), summing to one over \(i\in S_t\), such that

\[
 x=\sum_{i\in S_t}\lambda_i^{(t)}p_i.                      \tag{3.5}
\]

If every \(p_{ii}>1/m\), nonnegativity would give

\[
 \begin{aligned}
 x(S_t)
 &=\sum_{j\in S_t}x_j\\
 &=\sum_{i\in S_t}\lambda_i^{(t)}
   \sum_{j\in S_t}p_{ij}\\
 &\geq\sum_{i\in S_t}\lambda_i^{(t)}p_{ii}
 >\frac1m.
 \end{aligned}                                             \tag{3.6}
\]

Summing over the partition (3.4) would yield \(1>1\), a contradiction.
This proves (3.2).

For sharpness, take \(P=J_{\rho_{n,k}}\). Its rank is \(k\), and its
smallest diagonal entry is the reciprocal of its largest block size, namely
\(1/m\).
\end{proof}

\begin{remark}
A
row-stochastic matrix has spectral radius at most one. At most
\(\operatorname{rank}P\) of its eigenvalues, counted with algebraic
multiplicity, are nonzero. Taking real parts in the trace identity therefore
gives

\[
 \operatorname{tr}P\leq\operatorname{rank}P\leq k.
\]

Consequently \(\min_i p_{ii}\leq k/n\). When \(k\mid n\), this is already
the value \(1/\lceil n/k\rceil\) in Theorem 3.1. At the singular endpoint
\(k=n-1\), the same value \(1/2\) also follows from strict diagonal
dominance: if every \(p_{ii}>1/2\), then
\(p_{ii}>\sum_{j\ne i}p_{ij}\) in every row, forcing \(P\) to be
nonsingular. The genuinely stronger numerical content of Theorem 3.1 is
the nondivisible range, where

\[
 \frac1{\lceil n/k\rceil}<\frac{k}{n}.
\]

The shared indexing of rows and coordinates is essential in this argument.
No column-sum condition is assumed.
\end{remark}

\subsection{A rounded centroid theorem}

We now convert Theorem 3.1 into the geometric statement needed for rows of a
doubly stochastic matrix.

\begin{theorem}
Let \(1\leq k\leq n\), and let
\(v_1,\ldots,v_n\) be listed points in an affine space of dimension at most
\(k-1\). Put

\[
 \bar v=\frac1n\sum_{i=1}^n v_i,
 \qquad
 m=\left\lceil\frac nk\right\rceil.
\]

There is a convex representation

\[
 \bar v=\sum_{i=1}^n\lambda_i v_i,
 \qquad
 \lambda_i\geq0,
 \qquad
 \sum_i\lambda_i=1,                                       \tag{3.7}
\]

with at most \(k\) nonzero coefficients and

\[
 \boxed{\max_i\lambda_i\geq\frac mn.}                     \tag{3.8}
\]

The constant is sharp.
\end{theorem}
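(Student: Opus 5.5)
The plan is to get the bound from Theorem 3.1 by linear-programming duality. Duality turns a failure of (3.8) into a row-stochastic matrix of rank at most \(k\) whose diagonal entries are all too large.

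Let
\[
 \Lambda=\Bigl\{\lambda\in\mathbb R^n:\ \lambda\geq0,\ \sum_i\lambda_i=1,\ \sum_i\lambda_iv_i=\bar v\Bigr\}.
\]
This polytope is nonempty, since it contains the uniform vector \((1/n,\ldots,1/n)\), and it is compact.

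First, the support condition. The equality constraints say that the augmented vectors \((1,v_i)\) combine to \((1,\bar v)\), and these vectors span a space of dimension at most \(k\). Hence at every vertex of \(\Lambda\) the columns \((1,v_i)\) for the nonzero \(\lambda_i\) are linearly independent. So every vertex has at most \(k\) nonzero coefficients.

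For each \(i\), let \(\mu_i=\max\{\lambda_i:\lambda\in\Lambda\}\). A linear functional attains its maximum over a polytope at a vertex. So it suffices to find one \(i\) with \(\mu_i\geq m/n\); an optimal vertex for that \(i\) then gives the representation in (3.7)--(3.8).

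Suppose instead that \(\mu_i<m/n\) for every \(i\).

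\begin{itemize}
\item \emph{Dual certificates.} The dual of the LP for \(\mu_i\) asks for an affine function \(f_i\) on the ambient affine space with \(f_i(v_j)\geq\delta_{ij}\) for all \(j\), minimizing \(f_i(\bar v)\). By strong duality (the primal is feasible and bounded), there is such an \(f_i\) with \(f_i(\bar v)=\mu_i<m/n\).
\item \emph{The matrix \(Q\).} Set \(q_{ij}=f_i(v_j)\). Then \(Q\geq0\) and \(q_{ii}\geq1\).
\item \emph{Rank of \(Q\).} In affine coordinates each \(f_i\) has the form \(c_i+\langle y_i,\cdot\rangle\) with \(y_i\in\mathbb R^{k-1}\). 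Thus \(Q\) factors through \(\mathbb R^k\), and \(\operatorname{rank}Q\leq k\).
\item \emph{Row sums of \(Q\).} Since \(f_i\) is affine, \(s_i=\sum_jq_{ij}=nf_i(\bar v)<m\). Also \(s_i\geq q_{ii}\geq1>0\).
\end{itemize}

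Now put \(P=\operatorname{diag}(s_1^{-1},\ldots,s_n^{-1})\,Q\). This \(P\) is nonnegative and row stochastic, and \(\operatorname{rank}P\leq k\). Its diagonal satisfies \(p_{ii}\geq1/s_i>1/m\) for every \(i\). This contradicts Theorem 3.1, so some \(\mu_i\geq m/n\), which proves (3.8).

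The main step to check carefully is the dual formulation: the dual variables must form a genuine affine function on the \((k-1)\)-dimensional space. This is exactly what makes \(\operatorname{rank}Q\leq k\) and gives the row-sum identity \(\sum_jf_i(v_j)=nf_i(\bar v)\). I would write the dual out explicitly in coordinates to confirm this.

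For sharpness, take \(k\) affinely independent points \(w_1,\ldots,w_k\) in dimension \(k-1\). List each \(w_h\) with multiplicity \(t_h\), where \((t_1,\ldots,t_k)=\rho_{n,k}\). Then \(\bar v=\sum_h(t_h/n)w_h\). By affine independence, in any convex representation of \(\bar v\) the total weight on the copies of \(w_h\) equals \(t_h/n\). Hence every coefficient satisfies \(\lambda_i\leq\max_ht_h/n=m/n\). The uniform choice within each block has support larger than \(k\), so instead put all of block \(h\)'s weight on a single copy of \(w_h\). This has exactly \(k\) nonzero coefficients and attains \(m/n\), so the constant \(m/n\) is sharp.
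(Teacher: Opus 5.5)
Your proposal is correct and follows essentially the paper's route: LP duality turns a failure of (3.8) into a nonnegative row-stochastic matrix of rank at most $k$ whose diagonal entries all exceed $1/m$, contradicting Theorem 3.1, with the support bound coming from basic feasible solutions and the same balanced-multiplicity sharpness example. Your dual affine functionals $f_i$ are exactly the paper's vectors $e_i+z\in W$. Your row normalization needs only the one-sided bound $p_{ii}\geq 1/s_i$, rather than the paper's full reciprocal identity $\alpha_i\mu_i=1/n$ with its $z_i=0$ normalization, which is a slight streamlining.
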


\begin{proof}
Translate so that \(\bar v=0\), choose coordinates in
\(\mathbb R^d\), where \(d\leq k-1\), and let \(V\) be the matrix with
columns \(v_i\). Define

\[
 W=\operatorname{rowspan}
 \begin{pmatrix}\mathbf1^{\mathsf T}\\V\end{pmatrix},
 \qquad
 L=W^\perp.                                                 \tag{3.9}
\]

The polytope of representations of the origin is

\[
 \mathcal P
 =\{p\geq0:\mathbf1^{\mathsf T}p=1,\ Vp=0\}
 =\left(\frac1n\mathbf1+L\right)\cap\Delta_{n-1}.         \tag{3.10}
\]

For each coordinate, set

\[
 \alpha_i=\max_{p\in\mathcal P}p_i,
 \qquad
 \mu_i=\max_{q\in W\cap\Delta_{n-1}}q_i.                \tag{3.11}
\]

We first prove the reciprocal identity

\[
 \boxed{\alpha_i\mu_i=\frac1n.}                            \tag{3.12}
\]

Write \(p=(\mathbf1+y)/n\) in (3.10), and put
\(s_i=n\alpha_i-1\). Linear-programming duality \cite{schrijver1986} gives

\[
 \begin{aligned}
 s_i
 &=\max\{y_i:y\in L,\ y_j\geq-1\ \text{for every }j\}\\
 &=\min\{\mathbf1^{\mathsf T}z:
          z\geq0,\ e_i+z\in W\}.                          
 \end{aligned}                                             \tag{3.13}
\]

The primal is feasible because \(y=0\), and it is bounded above: since
\(\mathbf1\in W\), every \(y\in L\) has coordinate sum zero, so the lower
bounds \(y_j\geq-1\) imply \(y_i\leq n-1\). Finite-dimensional strong
duality therefore applies, and the displayed dual minimum is attained. Every
optimal \(z\) has \(z_i=0\): if \(z_i>0\), replacing \(e_i+z\) by
\((e_i+z)/(1+z_i)\) and then subtracting \(e_i\) gives a nonnegative feasible
dual vector with smaller coordinate sum. Consequently

\[
 q=\frac{e_i+z}{1+s_i}\in W\cap\Delta_{n-1},
 \qquad
 q_i=\frac1{1+s_i}=\frac1{n\alpha_i}.                      \tag{3.14}
\]

This proves \(\mu_i\geq1/(n\alpha_i)\). Conversely, if
\(p\in\mathcal P\) and \(q\in W\cap\Delta_{n-1}\), then

\[
 p^{\mathsf T}q=\frac1n                                   \tag{3.15}
\]

because \(p-\mathbf1/n\in L=W^\perp\). Nonnegativity gives
\(p_iq_i\leq1/n\); maximizing both coordinates proves the reverse
inequality in (3.12).

Suppose that \(\alpha_i<m/n\) for every \(i\). Then (3.12) would give
\(\mu_i>1/m\) for every \(i\). For each \(i\), choose
\(q^{(i)}\in W\cap\Delta_{n-1}\) with \(q_i^{(i)}=\mu_i\), and stack these
vectors as the rows of a matrix \(Q\). Then

\[
 Q\geq0,
 \qquad Q\mathbf1=\mathbf1,
 \qquad \operatorname{rank}Q\leq\dim W\leq k,
 \qquad q_{ii}>\frac1m,                                   \tag{3.16}
\]

contradicting Theorem 3.1. Hence some \(\alpha_i\geq m/n\). A maximizing
point may be chosen at a vertex of \(\mathcal P\). By the standard
basic-feasible-solution support bound \cite{schrijver1986}, it has at most
\(\dim W\leq k\) positive coordinates, proving the support assertion.

For sharpness, take \(k\) affinely independent point types with the balanced
multiplicities in \(\rho_{n,k}\). Every representation of their centroid has
fixed total weight \(t/n\) on a type of multiplicity \(t\). Thus no
coefficient can exceed \(m/n\), while concentrating each type weight on one
listed copy attains \(m/n\).
\end{proof}

\begin{remark}
Ordinary Carath\'eodory gives a
representation of \(\bar v\) on at most \(k\) listed points; since its
coefficients sum to one, one coefficient is at least \(1/k\). Theorem 3.2
improves this to

\[
 \frac{\lceil n/k\rceil}{n}\geq\frac1k,
\]

with strict improvement exactly when \(k\nmid n\).
\end{remark}

\subsection{The diffuse-row consequence}

\begin{corollary}
If \(A\in\Omega_n\) and
\(\operatorname{rank}A\leq k\), then some row \(a_i\) satisfies

\[
 \boxed{\max_j a_{ij}\leq
 \frac1{\lceil n/k\rceil}.}                                \tag{3.17}
\]
\end{corollary}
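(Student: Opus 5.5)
The plan is to apply Theorem 3.2 to the \emph{rows} of \(A\), viewed as listed points, and then read off the diffuse row from the heaviest coefficient. Let \(a_1,\ldots,a_n\in\mathbb R^n\) be the rows of \(A\) and put \(m=\lceil n/k\rceil\).

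First I check the hypotheses of Theorem 3.2. Each row lies in the row space of \(A\), which has dimension at most \(k\). Since \(A\) is row stochastic, each row also lies in the affine hyperplane \(\{x:\mathbf1^{\mathsf T}x=1\}\). That hyperplane does not pass through the origin, so its intersection with the row space has affine dimension at most \(k-1\). The centroid of the rows is
\[
 \bar v=\frac1n\sum_{i=1}^n a_i=\frac1n A^{\mathsf T}\mathbf1=\frac1n\mathbf1,
\]
because the column sums of \(A\) are one. This is the only place where double stochasticity enters beyond row stochasticity.

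Next, Theorem 3.2 gives a convex combination \(\sum_i\lambda_i a_i=\frac1n\mathbf1\) with \(\lambda\geq0\), \(\sum_i\lambda_i=1\), and some index \(i_0\) with \(\lambda_{i_0}\geq m/n\). Read this identity coordinatewise: for every column \(j\),
\[
 \sum_{i=1}^n\lambda_i a_{ij}=\frac1n.
\]
All terms are nonnegative, so \(\lambda_{i_0}a_{i_0 j}\leq 1/n\), and hence
\[
 a_{i_0 j}\leq\frac{1}{n\lambda_{i_0}}\leq\frac1m .
\]
This holds for every \(j\), so row \(i_0\) satisfies (3.17).

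The only delicate point is orientation. Applying Theorem 3.2 to the columns instead would produce a diffuse column, not a row. The fix is to take the rows as the points, so that the centroid computation uses the column sums and the coordinatewise reading uses the entries of a single row. The support bound on \(\lambda\) in Theorem 3.2 is not needed here.
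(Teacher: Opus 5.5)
Your proof is correct and follows the paper's argument essentially line for line. You apply Theorem 3.2 to the rows of \(A\), which have affine dimension at most \(k-1\) and centroid \(\frac1n\mathbf1\) because the column sums are one. You then read the resulting identity coordinatewise at a coefficient \(\lambda_{i_0}\geq m/n\) to get \(a_{i_0j}\leq 1/m\); your justification of the dimension bound just spells out what the paper cites from the proof of Theorem 3.1.
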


\begin{proof}
As in the proof of Theorem 3.1, the rows of \(A\) have affine dimension at
most \(k-1\). Their centroid is the uniform probability vector

\[
 u=\frac1n\mathbf1^{\mathsf T}                              \tag{3.18}
\]

because the column sums are one. By Theorem 3.2,

\[
 u=\sum_r\lambda_r a_{i_r}
\]

with some \(\lambda_{r_0}\geq m/n\), where
\(m=\lceil n/k\rceil\). For every coordinate \(j\), nonnegativity gives

\[
 \frac1n=u_j
 \geq\lambda_{r_0}a_{i_{r_0}j}
 \geq\frac mn a_{i_{r_0}j},                               \tag{3.19}
\]

and hence \(a_{i_{r_0}j}\leq1/m\).
\end{proof}

\section{Released-row induction and the sharp value}

For \(t\geq1\), write

\[
 f(t)=\frac{t!}{t^t}.
\]

When \(n>k\) and \(m=\lceil n/k\rceil\), deleting one element from a largest
part of \(\rho_{n,k}\) produces \(\rho_{n-1,k}\). Therefore

\[
 \frac{C_{n,k}}{C_{n-1,k}}
 =\frac{f(m)}{f(m-1)}
 =\left(1-\frac1m\right)^{m-1}.                            \tag{4.1}
\]

The same factor arises from the following sharp probability inequality.

\begin{lemma}
Let
\(a=(a_1,\ldots,a_N)\) satisfy

\[
 a_j\geq0,
 \qquad \sum_j a_j=1,
 \qquad a_j\leq\frac1m,
\]

where \(m\geq2\) is an integer. Then

\[
 H(a):=\sum_{j=1}^N a_j\prod_{\ell\ne j}(1-a_\ell)
 \leq\left(1-\frac1m\right)^{m-1}.                        \tag{4.2}
\]

Equality holds exactly when, up to permutation,

\[
 a=(\underbrace{1/m,\ldots,1/m}_{m},0,\ldots,0).           \tag{4.3}
\]
\end{lemma}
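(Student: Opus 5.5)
The plan is a smoothing (two-coordinate exchange) argument on the compact set of admissible vectors. The first step is to record the probabilistic reading of \(H\). If independent events \(E_j\) occur with probabilities \(a_j\), then \(H(a)\) is the probability that exactly one event occurs. \(H\) is multi-affine in the \(a_j\), and zero coordinates contribute nothing, so we may vary \(N\) freely.

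Fix all coordinates except \(a_1,a_2\), and hold \(s=a_1+a_2\) fixed. Let \(X\) be the probability that exactly one of the remaining events occurs, and \(Y=\prod_{\ell\ge3}(1-a_\ell)\). With \(p=a_1a_2\),
\[
 H=(1-s+p)X+(s-2p)Y=(1-s)X+sY+p\,(X-2Y).
\]
So along the segment \(a_1+a_2=s\), \(0\le a_1,a_2\le 1/m\), \(H\) is an affine function of \(p\). Moreover, \(p\) is maximal at \(a_1=a_2\) and minimal when one coordinate hits \(0\) or \(1/m\).

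Take a maximizer (it exists by compactness). Among all maximizers, choose one with the largest number of coordinates in \(\{0,1/m\}\). Suppose two coordinates lie strictly inside \((0,1/m)\).
\begin{itemize}
 \item If they are unequal, maximality forces \(X-2Y\le0\), since otherwise equalizing them would increase \(H\). Then spreading them to the boundary does not decrease \(H\) and adds a boundary coordinate, a contradiction.
 \item If they are equal and \(X-2Y\le0\), the same spreading applies.
\end{itemize}
Hence every maximizer of this type has the form
\[
 a=(\underbrace{1/m,\ldots,1/m}_{r},\underbrace{b,\ldots,b}_{l},0,\ldots,0),
 \qquad \frac rm+lb=1,\quad 0<b<\frac1m,
\]
where \(l=0\) recovers (4.3) (then \(r=m\)). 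In this configuration
\[
 H=\Bigl(1-\tfrac1m\Bigr)^{r}(1-b)^{l}\Bigl[\frac{r}{m-1}+\frac{lb}{1-b}\Bigr],
\]
and it remains to compare this with \((1-1/m)^{m-1}\).

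This explicit comparison is the step I expect to be the main obstacle. I would first treat \(r\) as fixed and the mass \(w=lb=1-r/m\in(0,1]\) as fixed. I would then view \(l\) as a real parameter \(l=w/b\) and try to show that
\[
 g(b)=(1-b)^{w/b}\Bigl[\frac{r}{m-1}+\frac{w}{1-b}\Bigr]
\]
is strictly increasing on \((0,1/m]\), for instance by differentiating \(\log g\) and using \(-\log(1-b)/b\) increasing. At \(b=1/m\) we get \(l=m-r\), and the value is exactly \((1-1/m)^{m-1}\). This yields strict inequality whenever \(0<b<1/m\). If monotonicity fails for some ranges, I would fall back to a direct inequality between \((1-b)^{l}\) and \((1-1/m)^{m-r}\) using \(lb=(m-r)/m\) and convexity of \(-\log(1-x)/x\), which is the same comparison. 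Note also that the integer \(l\ge2\) is forced when \(w\) is not a multiple of \(1/m\).

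For the equality case, suppose \(H(a)=(1-1/m)^{m-1}\). Then \(a\) is a maximizer, and I would show that every smoothing move used above is strict unless \(a\) already has all coordinates in \(\{0,1/m\}\). The strictness comes from analysing the case \(X-2Y=0\) separately: there \(H\) is constant along the segment, so \(a\) can be moved to a boundary configuration of the reduced family, where the comparison in the previous step is strict, giving a contradiction. Once all coordinates lie in \(\{0,1/m\}\), the sum condition forces exactly \(m\) coordinates equal to \(1/m\), which is (4.3).
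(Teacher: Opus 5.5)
\textbf{Verdict: genuine gap in the equality case.} Your smoothing set-up is right: $H=(1-s+p)X+(s-2p)Y$ is exactly the paper's decomposition (4.4). Your route to the inequality can also be completed. The monotonicity of $g$ on $(0,1/m]$ that you hope for does hold (the derivative of $\log g$ is positive there), although you leave it unverified. The failure is in how you handle $X-2Y=0$.

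\textbf{Why the equality argument fails.} If a maximizer has an interior pair with $X-2Y=0$, sliding along the segment only produces another maximizer. The sliding can terminate at the extremal configuration (4.3) itself. There your reduced-family comparison is an equality, not a strict inequality, so no contradiction arises.

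Concretely, take $m-1$ coordinates equal to $1/m$ and two coordinates $z,\ 1/m-z$ with $0<z<1/m$. Both endpoints of this segment are of the form (4.3). Your argument therefore cannot exclude this vector as a maximizer unless you know separately that $X\neq 2Y$ there.

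Once $X-2Y=0$ is excluded, your case analysis does close. Unequal pairs then force $X-2Y<0$ and a strict improvement; equal pairs with $X-2Y<0$ give the same; and the reduced family is handled strictly by $g$. So this is exactly the missing point.

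\textbf{The missing idea.} The paper shows that $X-2Y$ is negative for every admissible vector, not just at maximizers. Since $Y>0$ and each $a_\ell\le 1/m$,
\[
\frac{X}{Y}=\sum_{\ell\ge3}\frac{a_\ell}{1-a_\ell}\le\frac{m}{m-1}\sum_{\ell\ge3}a_\ell=\frac{m}{m-1}(1-s)<2,
\]
using $s>0$ and $m/(m-1)\le 2$. Hence, for fixed $s$, $H$ is strictly decreasing in $p=a_1a_2$. Any vector with two coordinates in $(0,1/m)$ is therefore strictly improved by spreading that pair to an endpoint of its feasible interval.

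It follows that every maximizer has at most one coordinate in $(0,1/m)$. It cannot have exactly one: if that coordinate is $z$ and $r$ coordinates equal $1/m$, then $z=(m-r)/m$, which is not in $(0,1/m)$. So every maximizer is (4.3).

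This gives the bound and the equality case at once. It also makes unnecessary the reduced family $\bigl((1/m)^r,b^l,0,\ldots\bigr)$, the function $g$, and the derivative computation you flagged as the main obstacle.

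\textbf{A small correction.} $w=(m-r)/m$ is always a multiple of $1/m$. What forces $l\ge2$ is simply $b<1/m\le w$.
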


\begin{proof}
The feasible set is nonempty only if \(N\geq m\), and it is compact, so \(H\)
has a maximizer. Fix one. Probabilistically, \(H(a)\) is the chance of exactly
one success among independent Bernoulli variables with success probabilities
\(a_j\). Since the coordinates sum to one and each is at most \(1/m\), at
least \(m\geq2\) are positive. After relabeling, suppose two of them are
\(a_1,a_2\). Put \(s=a_1+a_2\) and \(v=a_1a_2\), and let \(G_0,G_1\) be the
probabilities of zero and one success among the Bernoulli variables with
probabilities \(a_3,\ldots,a_N\). Recombining the selected pair gives

\[
 H=(1-s+v)G_1+(s-2v)G_0
 =\bigl((1-s)G_1+sG_0\bigr)+v(G_1-2G_0).                 \tag{4.4}
\]

During a fixed-sum perturbation of the selected pair, the remaining
coordinates and \(s\) are fixed. Thus the parenthesized term in (4.4) is
constant and only \(v\) varies.

Since \(a_r\leq1/m<1\), the probability
\(G_0=\prod_{r=3}^N(1-a_r)\) is positive. The mutually exclusive choices for
the unique successful remaining variable therefore give

\[
 \frac{G_1}{G_0}
 =\sum_{r=3}^N\frac{a_r}{1-a_r}
 \leq\frac{m}{m-1}\sum_{r=3}^N a_r
 =\frac{m}{m-1}(1-s)<2.                                   \tag{4.5}
\]

The last inequality follows from \(s>0\) and \(m/(m-1)\leq2\). Hence
\(G_1-2G_0<0\), so (4.4) shows that \(H\) strictly decreases with
\(v=a_1a_2\) when \(s=a_1+a_2\) is fixed.

Put \(u=1/m\). Keeping \(s\) and the remaining coordinates fixed, write the
selected pair as \((z,s-z)\). Its feasible interval is

\[
 \max\{0,s-u\}\leq z\leq\min\{u,s\}.
\]

The product \(z(s-z)\) is strictly concave. If both selected coordinates lie
strictly between \(0\) and \(u\), moving \(z\) to a suitable endpoint of
this interval strictly decreases their product and therefore strictly increases
\(H\). At an endpoint, one coordinate equals \(0\) when \(s\leq u\), or one
coordinate equals \(u\) when \(s\geq u\). It follows that a maximizer has at
most one coordinate strictly between \(0\) and \(u\).

If a maximizer had one such coordinate \(z\) and \(r\) coordinates equal to
\(u\), then

\[
 z=1-\frac rm=\frac{m-r}{m}.
\]

The condition \(0<z<1/m\) would imply \(0<m-r<1\), which is impossible
because \(m-r\) is an integer. Thus every maximizer has exactly \(m\)
coordinates equal to \(1/m\) and all other coordinates zero. Substitution
gives the value \((1-1/m)^{m-1}\), proving both (4.2) and its equality
statement.
\end{proof}

We can now prove the value assertion in Theorem 1.1.

\begin{proof}[Proof of the inequality in Theorem~1.1]
Fix \(k\), and induct
simultaneously over \(n\geq k\) on the stochastic theorem and its homogeneous
form. When \(n=k\), \(C_{k,k}=1\), so Lemmas 2.1 and 2.2 give the result.

Let \(n>k\), and assume the theorem at order \(n-1\) with the same rank
ceiling \(k\). Let \(A\geq0\) be row stochastic with
\(\operatorname{rank}A\leq k\). The case \(\operatorname{per}A=0\) is
immediate. Otherwise, Proposition 2.3 shows that it is enough to consider
\(A\in\Omega_n\). Corollary 3.3 supplies a row
\(a=(a_1,\ldots,a_n)\) with \(a_j\leq1/m\), where
\(m=\lceil n/k\rceil\). Expanding the permanent along that row gives

\[
 \operatorname{per}A
 =\sum_{j=1}^n a_j\operatorname{per}A(i\mid j),             \tag{4.6}
\]

where \(A(i\mid j)\) denotes the submatrix obtained by deleting the selected
row and column \(j\). Its rank is at most \(k\). For every remaining
column \(\ell\ne j\), the column sum of this submatrix is \(1-a_\ell\). The
order-\((n-1)\) homogeneous induction hypothesis therefore gives

\[
 \operatorname{per}A(i\mid j)
 \leq C_{n-1,k}\prod_{\ell\ne j}(1-a_\ell).                \tag{4.7}
\]

Combining (4.6), (4.7), Lemma 4.1, and (4.1), we obtain

\[
 \begin{aligned}
 \operatorname{per}A
 &\leq C_{n-1,k}H(a)\\
 &\leq C_{n-1,k}\left(1-\frac1m\right)^{m-1}\\
 &=C_{n,k}.
 \end{aligned}                                             \tag{4.8}
\]

This proves the row-stochastic statement at order \(n\); transposition gives
the column-stochastic statement, and Lemma 2.1 then gives the homogeneous
statement. The simultaneous induction is complete.
\end{proof}

\section{Equality cases}

It remains to prove that equality in the stochastic theorem forces the
balanced blocks. Proposition 2.3 already shows that a row-stochastic equality
matrix must be doubly stochastic. We therefore work in \(\Omega_n\).

\begin{theorem}
If
\(A\in\Omega_n\), \(\operatorname{rank}A\leq k\), and

\[
 \operatorname{per}A=C_{n,k},                              \tag{5.1}
\]

then

\[
 A=PJ_{\rho_{n,k}}Q                                        \tag{5.2}
\]

for permutation matrices \(P,Q\).
\end{theorem}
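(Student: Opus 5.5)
We argue by induction on \(n\geq k\), with \(k\) fixed, following the released-row induction of Section 4 and tracking where each inequality in (4.8) must be tight. For \(n=k\) we have \(C_{k,k}=1\), so Lemma 2.2 forces \(A\) to be a permutation matrix, which is \(PJ_{\rho_{k,k}}Q\). For \(n>k\), let \(A\in\Omega_n\) attain (5.1), and let \(i\) be the row supplied by Corollary 3.3, with entries \(a_j\leq1/m\), where \(m=\lceil n/k\rceil\). Equality throughout (4.8) gives two conditions. First, \(H(a)=(1-1/m)^{m-1}\), so by Lemma 4.1 row \(i\) has exactly \(m\) entries equal to \(1/m\), in a column set \(T\) with \(|T|=m\), and all its other entries are zero. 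Second, for each \(j\in T\), (4.7) is an equality: \(\operatorname{per}A(i\mid j)=C_{n-1,k}\prod_{\ell\ne j}(1-a_\ell)\). Since every line sum of \(A(i\mid j)\) is positive, this is the homogeneous equality case at order \(n-1\). The induction hypothesis, transported through Lemma 2.1 and (1.8), then says that the column normalization \(A(i\mid j)^c\) equals \(PJ_{\rho_{n-1,k}}Q\). For this to be usable we must carry the homogeneous equality classification along in the induction; Proposition 2.3 reduces the stochastic equality case to \(\Omega_n\).

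Next we read off the structure of \(A\) from this. Fix \(j\in T\). The columns of \(A(i\mid j)\) indexed by \(T\setminus\{j\}\) have sum \(1-1/m\), and the others have sum \(1\). Hence \(A(i\mid j)\) is a column rescaling of a balanced block matrix of order \(n-1\): its support is a disjoint union of \(k\) all-positive rectangular blocks of balanced sizes, and within each block the entries in a given column are constant. Now compare two choices \(j,j'\in T\), which is possible since \(m\geq2\). The matrices \(A(i\mid j)\) and \(A(i\mid j')\) share all columns outside \(\{j,j'\}\), and each contains the other's deleted column. From this we conclude two things. All columns of \(T\) restricted to rows \(\ne i\) lie in one block, because they are proportional to each other with equal sums. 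Column \(j'\) has constant entries \(1/m\) on a row set \(S\) with \(|S|=m-1\), where the value \(1/m\) comes from the column sum \(1-1/m\) divided over the block. The block containing \(T\) in \(A(i\mid j)\) has one fewer column than rows, so it must be one of the blocks of size \(m-1\) or \(m\) in \(\rho_{n-1,k}\) with the appropriate shape. Counting shows that \(S\cup\{i\}\) and \(T\) form an \(m\times m\) block \(J_m\) of \(A\). Double stochasticity then forces every other entry in the rows \(S\cup\{i\}\) and the columns \(T\) to vanish. Consequently \(A=P(J_m\oplus A')Q\) with \(A'\in\Omega_{n-m}\), and the remaining blocks of \(A(i\mid j)\), which are untouched, show that \(A'\) is itself a balanced block matrix. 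Together with \(J_m\), this gives \(\rho_{n,k}\).

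The main obstacle is the bookkeeping in the middle step. We must show that the column of \(A(i\mid j)\) corresponding to \(j'\in T\) lands in a block of \(\rho_{n-1,k}\) whose size matches \(m-1\). We must also rule out configurations in which the \(T\)-columns are spread across a block of size \(m\) that also contains non-\(T\) columns with sum \(1\); this is excluded because within a block the column sums of a rescaled \(J_t\) can differ, so we must use the row constancy of \(J_t\) instead. My first attempt here will use the fact that rows of a column-scaled \(J_t\) are identical within a block. Then row \(i'\in S\) of \(A\) agrees, off columns \(j\), with rows having the same block pattern. Combining this with the row sums of \(A\) equal to one pins down the scalings, so all columns in a block of \(A(i\mid j)\) are forced to have equal sums. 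This in turn forces the block containing \(T\setminus\{j\}\) to consist exactly of those columns.
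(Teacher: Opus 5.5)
\textbf{Where it goes wrong: the structural step never uses $\operatorname{rank}A\leq k$.}

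Your first half is sound and agrees with the paper. Equality in (4.8) forces row $i$ to be $\frac1m\mathbf1_T$ with $|T|=m$. For each $j\in T$, the column normalization $D$ of $A(i\mid j)$ equals $P'J_{\rho_{n-1,k}}Q'$. The paper gets this by scaling the $T\setminus\{j\}$ columns by $m/(m-1)$ and using the strict part of Proposition 2.3, which is what your appeal to the homogeneous classification amounts to.

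The gap is the structural step. There you use only three things: double stochasticity of $A$, the shape of row $i$, and the block form of the minors $A(i\mid j)$ for $j\in T$. The hypothesis $\operatorname{rank}A\leq k$ on the full matrix never enters, and without it the conclusion is false. Take $n=4$, $k=2$ (so $m=2$), $i=1$, $T=\{1,2\}$, and
\[
 A=\begin{pmatrix}
 \tfrac12&\tfrac12&0&0\\
 \tfrac14&\tfrac14&\tfrac12&0\\
 \tfrac14&\tfrac14&\tfrac12&0\\
 0&0&0&1
 \end{pmatrix}.
\]
Then:
\begin{itemize}
\item $A\in\Omega_4$, row $1$ is $\frac12\mathbf1_T$, and $\operatorname{per}A=\frac14=C_{4,2}$;
\item for both $j\in T$, the minor $A(1\mid j)$ attains equality in (4.7), has rank $2$, and has column normalization $J_2\oplus J_1=J_{\rho_{3,2}}$.
\end{itemize}
Yet $A$ is not $P(J_2\oplus J_2)Q$; its rank is $3$.

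In this example $T\setminus\{j\}$ sits in a block of size $m$ together with a non-$T$ column of sum $1$. That is exactly the configuration you flag. Also, column $j'$ equals $\frac14$ on two rows, not $\frac1m$ on $m-1$ rows. So your proposed repair cannot work: this $A$ has all row sums one, yet its blocks do not have equal column sums. Separately, ``proportional with equal sums'' as the reason the $T$-columns share a block is circular, since proportionality already presupposes a common block.

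\textbf{The missing idea is the rank of $A$ itself, which is how the paper closes this step.}
\begin{enumerate}
\item Scale the $T$-columns of $A$ by $m/(m-1)$ and delete column $j$. The resulting $n\times(n-1)$ matrix stacks $r=\frac1{m-1}\mathbf1_{T\setminus\{j\}}$ on top of $D$.
\item This matrix has rank at most $\operatorname{rank}A\leq k=\operatorname{rank}D$. Hence $r\in\operatorname{rowspan}D$.
\item So $r$ is constant on every column block of $D$, and $T\setminus\{j\}$ is a union of complete blocks.
\item The parts of $\rho_{n-1,k}$ have size $m-1$ or $m$, and $|T\setminus\{j\}|=m-1$. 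So $T\setminus\{j\}$ is exactly one block of size $m-1$, which rules out the size-$m$ configuration.
\end{enumerate}
In the example, $r$ is $1$ on column $2$ and $0$ on column $3$ of the block $\{2,3\}$, which is why the rank jumps to $3$.

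Once this is in place, your ending goes through. Undoing the scaling gives entries $\frac1m$ on that block. Row sums then force column $j$ to be $\frac1m$ on the block's rows and on row $i$. A $J_m$ block splits off, and the remaining blocks of $D$ are unchanged.
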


\begin{proof}
Fix \(k\) and induct on \(n\geq k\), together with the value
proof. At \(n=k\), equality in Lemma 2.2 makes \(A\) a permutation matrix,
which is (5.2) because \(J_{\rho_{k,k}}=I_k\).

Let \(n>k\), put \(m=\lceil n/k\rceil\), and choose an index \(i\) such that
the row

\[
 a:=A_{i,*}=(a_1,\ldots,a_n)
\]

is supplied by Corollary 3.3. Equality throughout (4.8), together with the
equality case of Lemma 4.1, forces this row to be

\nopagebreak[4]
\[
 a=\frac1m\mathbf1_S,
 \qquad |S|=m.                                              \tag{5.3}
\]

Here \(S\subseteq[n]\), and \(\mathbf1_S\in\mathbb R^n\) is its indicator
vector: its \(\ell\)-th coordinate is one for \(\ell\in S\) and zero
otherwise.

Fix \(j\in S\), set \(T=S\setminus\{j\}\), and let
\(M=A(i\mid j)\). Since \(a_j>0\), equality in the sum (4.6)--(4.7) forces

\[
 \operatorname{per}M
 =C_{n-1,k}\prod_{\ell\ne j}(1-a_\ell).                   \tag{5.4}
\]

The columns of \(M\) indexed by \(T\) have sum \((m-1)/m\), and all its
other columns have sum one. Scale the \(T\)-columns by \(m/(m-1)\), and call
the resulting matrix \(D\). Then \(D\) is column stochastic,

\[
 \operatorname{rank}D\leq k,
 \qquad
 \operatorname{per}D=C_{n-1,k}.                            \tag{5.5}
\]

By the transpose of Proposition 2.3 and the order-\((n-1)\) value bound, \(D\)
must be doubly stochastic: otherwise the strict part of the proposition would
produce a matrix in \(\Omega_{n-1}\) of rank at most \(k\) and permanent
strictly greater than \(C_{n-1,k}\). The equality-case induction hypothesis
now yields

\[
 D=P'J_{\rho_{n-1,k}}Q'.                                   \tag{5.6}
\]

In particular, \(D\) has rank exactly \(k\).

Apply the same \(T\)-column scaling to all of \(A\), and then delete only
column \(j\). After placing the selected row first, the resulting
\(n\times(n-1)\) matrix has the form

\[
 \begin{pmatrix}r\\D\end{pmatrix},
 \qquad
 r=\frac1{m-1}\mathbf1_T.                                  \tag{5.7}
\]

Here \(\mathbf1_T\in\mathbb R^{n-1}\) is the indicator vector of \(T\) in
the remaining column coordinate set \([n]\setminus\{j\}\).

Column scaling and deletion cannot increase rank, so the rectangular matrix
has rank at most \(k\). Since \(D\) has rank \(k\), we must have

\[
 r\in\operatorname{rowspan}D.                              \tag{5.8}
\]

The rowspace of a direct sum of \(J\)-blocks consists exactly of the vectors
that are constant on each column block. Hence (5.7)--(5.8) show that \(T\)
is a union of complete column blocks in (5.6).

Since \((m-1)k\leq n-1<mk\), every part of \(\rho_{n-1,k}\) has size
\(m-1\) or \(m\), and at least one has size \(m-1\). Since \(|T|=m-1\),
the set \(T\) is exactly one \((m-1)\)-column block. Let \(R\) be its
associated row block.
Before undoing the scaling, this block is \(J_{m-1}\). Undoing the scaling
changes all its entries from \(1/(m-1)\) to \(1/m\). Thus, in the matrix
obtained from \(A\) by deleting column \(j\), the rows in \(R\), as well as
the selected row \(i\), have row sum \((m-1)/m\), while all other rows have
row sum one. Row stochasticity of \(A\) forces column \(j\) to equal \(1/m\)
on \(R\cup\{i\}\) and zero elsewhere. Together with (5.3), the selected
row, column \(j\), and the block
\(R\times T\) form one \(J_m\) block. Every other block in (5.6) is
unchanged.

Replacing one \((m-1)\)-part of \(\rho_{n-1,k}\) by an \(m\)-part gives
exactly \(\rho_{n,k}\). This proves (5.2).
\end{proof}

Conversely, each \(PJ_{\rho_{n,k}}Q\) is doubly stochastic, has rank \(k\),
and has permanent \(C_{n,k}\). Theorem 5.1 and Proposition 2.3 therefore
give the exact row-stochastic equality class; transposition gives the column
class.

Finally, suppose \(A\geq0\) has positive line sums. Equations (2.2)--(2.3)
show that equality in the smaller of the two homogeneous bounds is exactly
equality for the corresponding stochastic normalization. If
\(R(A)=C(A)\), equality in one normalization is automatically equality in
the other. Together with the zero-row and zero-column cases, this is exactly
the homogeneous classification stated after Theorem 1.1. The proof of the
theorem is complete.

\section*{Acknowledgments}

The proof of this conjecture was carried out by GPT-5.6-sol under the guidance
of the author. The author has reviewed the resulting proof arguments.
Responsibility for the final text rests with the author.


\begin{thebibliography}{9}

\bibitem{lavi2018}
Y. Lavi,
\newblock The permanent and diagonal products on the set of nonnegative
matrices with bounded rank,
\newblock arXiv:1808.00016v1 (2018).
\newblock \href{https://doi.org/10.48550/arXiv.1808.00016}{doi:10.48550/arXiv.1808.00016}.

\bibitem{marcusminc1965}
M. Marcus and H. Minc,
\newblock Generalized matrix functions,
\newblock \emph{Trans. Amer. Math. Soc.} \textbf{116} (1965), 316--329.
\newblock \href{https://doi.org/10.1090/S0002-9947-1965-0194445-9}{doi:10.1090/S0002-9947-1965-0194445-9}.

\bibitem{sinkhornknopp1967}
R. Sinkhorn and P. Knopp,
\newblock Concerning nonnegative matrices and doubly stochastic matrices,
\newblock \emph{Pacific J. Math.} \textbf{21} (1967), no.~2, 343--348.
\newblock \href{https://doi.org/10.2140/pjm.1967.21.343}{doi:10.2140/pjm.1967.21.343}.

\bibitem{tverberg1966}
H. Tverberg,
\newblock A generalization of Radon's theorem,
\newblock \emph{J. London Math. Soc.} \textbf{41} (1966), 123--128.
\newblock \href{https://doi.org/10.1112/jlms/s1-41.1.123}{doi:10.1112/jlms/s1-41.1.123}.

\bibitem{schrijver1986}
A. Schrijver,
\newblock \emph{Theory of Linear and Integer Programming},
\newblock John Wiley \& Sons, Chichester, 1986.

\end{thebibliography}
\end{document}